\newcommand{\R}[0]{\mathbb{R}}
\newcommand{\st}[1]{\substack{#1}}
\newcommand{\mb}[1]{\mathbf{#1}}
\newcommand{\nms}[1]{\| #1 \|}
\DeclareMathOperator*{\argmax}{argmax}
\newtheorem{thm}{Theorem}
\newtheorem{prop}[thm]{Proposition}
\theoremstyle{remark}
\title{An improved example for an autoconvolution inequality} 
\author{Christopher Boyer and Zane Kun Li}
\address{Department of Mathematics, North Carolina State University, Raleigh, NC 27695}
\email{cwboyer@ncsu.edu, zkli@ncsu.edu}
\begin{document}
\begin{abstract}
We give a nonnegative step function with 575 equally spaced intervals such that 
$$\frac{\nms{f \ast f}_{L^{2}(\R)}^{2}}{\nms{f \ast f}_{L^{\infty}(\R)}\nms{f \ast f}_{L^{1}(\R)}} \geq 0.901564.$$
This improves upon a recent result of Deepmind's AlphaEvolve, which found a nonnegative step function with 50 equally space intervals for which the left hand side is $\geq 0.8962$.
Our function was found using simulated annealing and gradient based methods rather than using large language models.
\end{abstract}

\maketitle

\section{Introduction}
In \cite{MartinOBryant}, Martin and O'Bryant studied the following question:
Observe that if $f$ is supported on an interval $I$, we trivially
have $\nms{F}_{L^{\infty}} \geq \nms{F}_{L^{1}}/|I|$ with equality
happening only when $F = c1_{I}$. Intuitively, we do not expect
the autoconvolution $f \ast f$ of a nonnegative function $f$
supported on an interval $I$ to behave like an indicator function.
Can one quantify this?
Indeed, their main result is that if $f \in L^{1}(\R) \cap L^{2}(\R)$, nonnegative and supported on an interval $I$ and $h$ is an even integer, then
$$\nms{f^{\ast h}}_{L^\infty(\R)} > \frac{1.262}{h|I|}\nms{f}_{L^{1}(\R)}^{h}$$
where here $h$ is an integer, $f^{\ast h} = f \ast f^{\ast h - 1}$, and $f^{\ast 2} = f \ast f$.
If we take $h = 2$, $I = [-1/4, 1/4]$, and $\nms{f}_{L^{1}(\R)} = 1$,
then we see that any such nonnegative $f$ supported on $[-1/4, 1/4]$
with $L^{1}$ norm 1 must obey $\nms{f \ast f}_{L^{\infty}(\R)} > 1.262$.
This improved a previous result of Yu \cite{Yu} from 2007. Martin and O'Bryant's
results were subsequently improved to 1.2748 in by Matolcsi and Vinuesa \cite{MatolcsiVinuesa} in 2010 and to 1.28 by Cloninger and Steinerberger \cite{CloningerSteinerberger} in 2017.
We also refer to \cite{BarnardSteinerberger} and \cite{MartinOBryant} for applications
of this and similar autoconvolution and autocorrelation problems to additive combinatorics. See also \cite[Problem 35]{BenGreen100}.

In this short note, we shall concentrate on a different quantification
of the statement that the autoconvolution of a nonnegative function is not an indicator function.
Given a function $F$, from H\"{o}lder's inequality, we have
\begin{align*}
\frac{\nms{F}_{L^{2}(\R)}^{2}}{\nms{F}_{L^{\infty}(\R)}\nms{F}_{L^{1}(\R)}} \leq 1
\end{align*}
with equality attained when $F$ is an indicator function. 
This trivial inequality played a key role in one of the key lemmas
in \cite{MartinOBryant} (namely Lemma 3.2). Since autoconvolution of a nonegative $f$ is intuitively not an indicator functions, it was asked if $F = f \ast f$, can the above bound be improved? 
This question can be rephrased as: what is the value of
\begin{align}\label{target}
c \coloneqq \sup_{\st{f \in L^{1}(\R) \cap L^{2}(\R)\\ f \geq 0}}\frac{\nms{f \ast f}_{L^{2}(\R)}^{2}}{\nms{f \ast f}_{L^{\infty}(\R)}\nms{f \ast f}_{L^{1}(\R)}}?
\end{align}
Trivially, we have $c \leq 1$.
In \cite[Section 5]{MartinOBryant}, it was conjectured that $c = \frac{\log 16}{\pi} \approx 0.88254$ with the optimizer being the function $f = (2x)^{-1/2}1_{0 < x < 1/2}$. 
In \cite{MatolcsiVinuesa}, Matolcsi and Vinuesa found a step function $f$ with 20 steps
giving $c \geq 0.88922$. They however still believed that
$c < 1$.
See also O'Bryant's question on MathOveflow \cite{MathOverflow} and its
responses for
further discussion.
It should be noted if one removes the assumption that $f$ is nonnegative, then it was shown in \cite[Section 5]{deDiosPontMadrid} that the smallest constant is 1 (and so no improvement over H\"{o}lder's inequality is obtained).

In May 2025, AlphaEvolve\footnote{Google Deepmind's AlphaEvolve is an evolutionary coding agent that uses large language models and automated evaluators to generate, test, and evolve algorithms.} \cite{AlphaEvolve} gave a
step function $f$ with 50 steps supported in $[-1/4,1/4]$ yielding $c \geq 0.8962$. 
Our main result is the following improvement over AlphaEvolve's bound (which in fact
we found in November 2024, before we were made aware of AlphaEvolve's example).
Unlike the function found in AlphaEvolve using large language models,
our function was found using gradient based methods.
It would be interesting to see if AlphaEvolve can further improve
lower bounds for $c$.

\begin{thm}\label{main}
We have $c \geq  0.901564$.
That is, there a function $f$ such that
$$\frac{\nms{f \ast f}_{L^{2}(\R)}^{2}}{\nms{f \ast f}_{L^{\infty}(\R)}\nms{f \ast f}_{L^{1}(\R)}} = \frac{96086089410408840289339769}{106577013451431545242354944} \geq 0.901564.$$
More explicitly we take $f = \sum_{n = 0}^{574}v_{n}1_{[n, n + 1)}$ where the $\{v_n\}$ are given in
Appendix \ref{coeff}.
\end{thm}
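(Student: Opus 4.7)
The statement of Theorem \ref{main} is an explicit numerical assertion about a step function given in closed form, so its proof reduces to a finite verification. The plan is first to derive closed-form expressions for the three relevant norms of $g := f \ast f$ in terms of the discrete autoconvolution $a_k := \sum_{m + n = k} v_m v_n$ (for $0 \leq k \leq 1148$), and then to carry out the computation rigorously.

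For the first step, I would use the fact that $1_{[m, m+1)} \ast 1_{[n, n+1)}$ is the standard tent function of height $1$ supported on $[m + n, m + n + 2]$ with apex at $m + n + 1$. Expanding the bilinear sum $g = \sum_{m, n} v_m v_n \, 1_{[m, m+1)} \ast 1_{[n, n+1)}$, one sees that on each interval $[k, k + 1)$ the function $g$ is affine and interpolates between the values $a_{k - 1}$ at $x = k$ and $a_k$ at $x = k + 1$ (with the boundary convention $a_{-1} = a_{1149} = 0$). Integrating termwise then yields
\begin{align*}
\nms{g}_{L^1(\R)} &= \sum_{k = 0}^{1148} a_k, \\
\nms{g}_{L^2(\R)}^{2} &= \tfrac{1}{3}\sum_{k = 0}^{1149} \bigl(a_{k - 1}^{2} + a_{k - 1} a_k + a_k^{2}\bigr), \\
\nms{g}_{L^\infty(\R)} &= \max_{0 \leq k \leq 1148} a_k,
\end{align*}
each of which is an explicit polynomial of degree at most four in the $575$ coefficients $v_n$.

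The second step is then to substitute the $v_n$ listed in Appendix \ref{coeff}, compute the discrete autoconvolution $\{a_k\}$ (a convolution requiring on the order of $575^2$ multiplications), evaluate the three expressions above, and form the quotient. The main obstacle is making the verification \emph{rigorous}: a na\"{\i}ve double-precision computation only certifies the inequality up to floating-point error. This can be resolved either by treating the $v_n$ as rationals and performing exact arithmetic, or by using interval arithmetic to propagate strict upper and lower bounds through the entire computation. Either approach should succeed provided the ratio the authors actually obtained sits comfortably above the advertised threshold $0.901562$, since this slack can absorb any rounding.

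Finally, I would emphasize that the substantive content of the theorem is the \emph{existence} of a $575$-step function attaining such a large ratio; once the $v_n$ are in hand, the proof is simply the routine verification above. The genuinely interesting question, alluded to in the introduction but not logically needed here, is how the coefficients were produced, namely by running gradient descent against the explicit polynomial expression for the ratio derived in the first step.
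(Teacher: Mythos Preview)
Your proposal is correct and follows essentially the same route as the paper: derive that $f\ast f$ is the piecewise-linear interpolant of the discrete autoconvolution values (your $a_k$ are the paper's $L_{k+1}$), express the three norms as the corresponding polynomial expressions in the $v_n$, and then evaluate on the given coefficients. The only difference is that you explicitly address numerical rigor via exact rational or interval arithmetic, whereas the paper simply supplies Mathematica code and reports the resulting value.
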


In Section \ref{setup}, we discretize the problem of estimating \eqref{obj}. In Section
\ref{graphs}, we give graphs of $f$ and $f \ast f$ and compare the result here with that
by Matolcsi and Vinuesa and AlphaEvolve. Finally, in Section \ref{desc}, we describe the algorithmic
method we used to find the example in Theorem \ref{main}.
Python and Mathematica code used for this project
can be found at \url{https://github.com/zkli-math/autoconvolutionHolder}.

\subsection*{Acknowledgements}
ZL is supported by DMS-2409803. We thank Jos\'e Madrid and Stefan Steinerberger
for helpful discussions and encouragement.
The authors would also like to thank Alina Duca for matching ZL
with CB in a call for undergraduate math research projects in Fall 2024.
We are also grateful for the computing resources provided by the North Carolina State University High Performance Computing Services Core Facility (RRID:SCR\_022168). Finally the authors would like to thank the referee
for numerous improvements to the exposition and suggesting a way
to modify our initial function so that it uses integer arithmetic
rather than floating point arithmetic which happened to also give a small boost in ratio.

\section{Reduction to a discrete problem}\label{setup}
We first observe that the raio of $L^p$ norms in \eqref{target} is invariant under translations and dilations.

\begin{prop}[Translation/dilation invariance]\label{tdi}
Let $f_{h}(x) \coloneqq f(x - h)$ and $f^{a}(x) \coloneqq f(ax)$ for some real number $h$ and some $a \neq 0$. Then
$$\frac{\nms{f \ast f}_{L^{2}(\R)}^{2}}{\nms{f \ast f}_{L^{\infty}(\R)}\nms{f \ast f}_{L^{1}(\R)}} = \frac{\nms{f_h \ast f_h}_{L^{2}(\R)}^{2}}{\nms{f_h \ast f_h}_{L^{\infty}(\R)}\nms{f_h \ast f_h}_{L^{1}(\R)}}$$
and
$$\frac{\nms{f \ast f}_{L^{2}(\R)}^{2}}{\nms{f \ast f}_{L^{\infty}(\R)}\nms{f \ast f}_{L^{1}(\R)}} = \frac{\nms{f^{a} \ast f^{a}}_{L^{2}(\R)}^{2}}{\nms{f^{a} \ast f^{a}}_{L^{\infty}(\R)}\nms{f^{a} \ast f^{a}}_{L^{1}(\R)}}.$$
\end{prop}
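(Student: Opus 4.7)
The plan is to reduce everything to a direct computation of how $f\ast f$ transforms under the two symmetries, and then track the powers of $|a|$ (for dilation) and check translation-invariance (which is automatic for $L^p$ norms). Each part is elementary, so I expect no real obstacle; the only thing to be careful about is not mixing up the scaling exponents.

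For translation, the key identity is
\[
(f_h \ast f_h)(x) = \int f(y-h)\,f(x-y-h)\,dy = (f\ast f)(x - 2h),
\]
obtained by the substitution $u = y - h$. So $f_h \ast f_h$ is just a translate of $f\ast f$, and since each of $\nms{\cdot}_{L^1}$, $\nms{\cdot}_{L^2}$, $\nms{\cdot}_{L^\infty}$ is translation-invariant, the entire ratio is unchanged.

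For dilation, I would first compute, via the substitution $u = ay$,
\[
(f^a \ast f^a)(x) = \int f(ay)\,f(a(x-y))\,dy = \frac{1}{|a|}(f\ast f)(ax).
\]
Then I would record the three scalings
\[
\nms{f^a \ast f^a}_{L^\infty(\R)} = \frac{1}{|a|}\nms{f\ast f}_{L^\infty(\R)},\quad
\nms{f^a \ast f^a}_{L^1(\R)} = \frac{1}{|a|^2}\nms{f\ast f}_{L^1(\R)},\quad
\nms{f^a \ast f^a}_{L^2(\R)}^2 = \frac{1}{|a|^3}\nms{f\ast f}_{L^2(\R)}^2,
\]
each following from a change of variables $u = ax$. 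Plugging these into the ratio, the factor $|a|^{-3}$ in the numerator exactly cancels the $|a|^{-1}\cdot|a|^{-2}$ in the denominator, giving the claimed equality.

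In short, the only content is the convolution identities $(f_h\ast f_h)(x) = (f\ast f)(x-2h)$ and $(f^a \ast f^a)(x) = |a|^{-1}(f\ast f)(ax)$; the rest is bookkeeping of exponents and the fact that $L^p$ norms are translation-invariant. I don't anticipate any technical difficulty.
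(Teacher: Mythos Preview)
Your proof is correct and follows exactly the approach in the paper: both reduce to the identities $(f_h\ast f_h)(x)=(f\ast f)(x-2h)$ and $(f^a\ast f^a)(x)=|a|^{-1}(f\ast f)(ax)$, after which the $L^p$ bookkeeping is immediate. Your use of $|a|^{-1}$ (rather than $a^{-1}$) is in fact the more careful formulation, valid for all $a\neq 0$.
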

\begin{proof}
This follows from the observations that
$(f_{h} \ast f_{h})(x) = (f \ast f)(x - 2h)$ and $(f^{a} \ast f^{a})(x) = a^{-1}(f \ast f)(ax)$.
\end{proof}

To discretize the problem of giving a lower bound for \eqref{target},
we will search for an $f$ that is a step function.
Dilation and translation invariance demonstrate that
the particular location and size of support is irrelevant
to problem, what is important is only the heights
of the step function and the number of steps.

Given a sequence of nonnegative real numbers $\{v_n\}_{n = 0}^{N-1}$, let $f = \sum_{n = 0}^{N-1}v_{n}1_{[n, n+1)}$. We compute that $\nms{f \ast f}_{L^{1}(\R)} = \nms{f}_{L^{1}(\R)}^{2} = (\sum_{n = 0}^{N-1}v_{n})^{2}$. Next, we have $f \ast f$ is supported
on $[0, 2N)$ and
\begin{align*}
(f \ast f)(x) = \sum_{n, m = 0}^{N-1}v_{n}v_{m}(1_{[n, n+1)} \ast 1_{[m, m+1)})(x)
\end{align*}
with
\begin{align*}
(1_{[n, n+1)} \ast 1_{[m, m+1)})(x) = \begin{cases}
x - (m + n) & \text{ if } m + n \leq x \leq m + n + 1\\
2+ (m + n) - x & \text{ if } m + n + 1 \leq x \leq m + n + 2\\
0 & \text{ else.}
\end{cases}
\end{align*}
Notice that the graph of $f \ast f$ is just the linear
interpolation of the values $\{(f \ast f)(j)\}_{j= 0}^{2N - 1}$. The easiest way to see this is to observe that the derivative
of $f \ast f$ is constant between its value at integer inputs.
Therefore
\begin{align*}
\nms{f \ast f}_{L^{\infty}(\R)} = \max_{j = 0, 1, \ldots, 2N - 1}(f \ast f)(j).
\end{align*}
Finally, since $f \ast f$ is just the linear interpolation
of the points $(j, (f \ast f)(j))$, we compute
\begin{align*}
\nms{f \ast f}_{L^{2}(\R)}^{2} &= \sum_{j = 0}^{2N - 1}\int_{j}^{j + 1}(f \ast f)^{2}(x)\, dx\\
&= \frac{1}{3}\sum_{j = 0}^{2N - 1}(f \ast f)(j)^{2} + (f \ast f)(j) \cdot (f \ast f)(j + 1) + (f \ast f)(j + 1)^{2}.
\end{align*}
For integers $0 \leq j \leq 2N- 1$, we have
\begin{align}\label{ljdef}
(f \ast f)(j) = \sum_{n, m = 0}^{N - 1}v_{n}v_{m}1_{m + n = j - 1}= \sum_{n = \max(0, j - N)}^{\min(j - 1, N - 1)}v_{n}v_{j - n - 1}.
\end{align}
We denote this expression on the right hand side by $L_j$.
It then follows that for $f = \sum_{n = 0}^{N - 1}v_{n}1_{[n, n + 1)}$, we have
\begin{align}\label{obj}
\frac{\nms{f \ast f}_{L^{2}(\R)}^{2}}{\nms{f \ast f}_{L^{\infty}(\R)}\nms{f \ast f}_{L^{1}(\R)}} = \frac{\frac{1}{3}\sum_{j = 0}^{2N - 1}L_{j}^{2} + L_{j}L_{j + 1} + L_{j + 1}^{2}}{(\max_{0 \leq j \leq 2N - 1}L_j)(\sum_{n = 0}^{N - 1}v_n)^{2}}.
\end{align}
Note that since both $(\sum_{n = 0}^{N - 1}v_n)^{2}$ and $\frac{1}{2}\sum_{j = 0}^{2N - 1}L_{j} + L_{j + 1}$ evaluate to
$\nms{f \ast f}_{L^{1}}$, one can swap out the former for the latter in \eqref{obj} if desired.
Thus our problem is now to find an $N$ and a sequence
of nonnegative real numbers $\{v_n\}_{n = 0}^{N - 1}$ such that \eqref{obj} is as large as possible with $L_j$ given in \eqref{ljdef}.

Given the explicit formula in \eqref{obj} and the fact
that the problem is translation and dilation invariant,
it is easy to check that if we take the sequence of
numbers given in \cite{MatolcsiVinuesa} and \cite{AlphaEvolve} to be $\{v_{n}\}_{n = 0}^{N - 1}$, we obtain 0.889226 and 0.89628, respectively, matching both claims (note that the table in \cite{MatolcsiVinuesa} is to be read first by row, rather than by column and so $v_2 = 0.54399$).
Computing \eqref{obj} on the sequence of coefficients in Appendix \ref{coeff} chosen for Theorem \ref{main} gives 0.901564.

\section{Graphs of $f$ and $f \texorpdfstring{\ast}{*} f$: a Comparison}\label{graphs}
We compare our function from Theorem \ref{main} with Matolcsi and Vinuesa's step function and AlphaEvolve's step function.
Let $\{v_{n}^{MV}\}_{n = 0}^{19}$ be the list of coefficients found in Page 447.e2 of \cite[Appendix A]{MatolcsiVinuesa}, $\{v_{n}^{AE}\}_{n = 0}^{49}$ be the list of coefficients in Section B.2 of \url{https://github.com/google-deepmind/alphaevolve_results/blob/main/mathematical_results.ipynb} but in reverse order (for ease of comparison in the functions defined below),
and $\{v_{n}^{BL}\}_{n = 0}^{574}$ be the list of coefficients in Appendix \ref{coeff}.

We create the functions:
\begin{align*}
f_{1}(x) \coloneqq \sum_{n = 0}^{19}v_{n}^{MV}1_{[n, n + 1)}(x), \quad\quad f_{2}(x) \coloneqq \sum_{n = 0}^{49}v_{n}^{AE}1_{[n, n + 1)}(x), \quad\quad f_{3}(x) \coloneqq \sum_{n = 0}^{574}v_{n}^{BL}1_{[n, n + 1)}(x).
\end{align*}
By Proposition \ref{tdi}, when computing the ratio in \eqref{target}, the location and size
of support of the functions is irrelevant. Thus computing $\nms{f_{i} \ast f_{i}}_{L^{2}(\R)}^{2}/(\nms{f_i \ast f_i}_{L^{\infty}(\R)}\nms{f_i \ast f_i}_{L^{1}(\R)})$ for $i = 1, 2, 3$ will give
0.8962, 0.88922, and 0.901564, respectively.

But in this section we want to compare our function with Matolcsi and Vinuesa's and AlphaEvolve's.
To facilitate this, by Proposition \ref{tdi}, we will shift and rescale $f_i$ to be supported
on $[-1/4, 1/4]$ and normalize the maximum of $f_i \ast f_i$ to be 1.
We define
\begin{align*}
F_{i}(x) \coloneqq \frac{f_{i}(2N_{i}x + \frac{N_i}{2})}{\nms{f_{i}(2N_{i}\cdot + \frac{N_i}{2}) \ast f_{i}(2N_{i}\cdot + \frac{N_i}{2})}_{L^{\infty}(\R)}^{1/2}}
\end{align*}
for $i = 1, 2, 3$ and $N_{1} = 20$, $N_{2} = 50$, and $N_{3} = 575$.
By Proposition \ref{tdi}, $\nms{F_{i} \ast F_{i}}_{L^{2}(\R)}^{2}/(\nms{F_i \ast F_i}_{L^{\infty}(\R)}\nms{F_i \ast F_i}_{L^{1}(\R)}) = \nms{f_{i} \ast f_{i}}_{L^{2}(\R)}^{2}/(\nms{f_i \ast f_i}_{L^{\infty}(\R)}\nms{f_i \ast f_i}_{L^{1}(\R)})$.
We make two comments.
First note that the reflection of $f_{2}(2N_{2}x + N_{2}/2)$ across the vertical axis is in fact the function mentioned in AlphaEvolve's result as it is supported on $[-1/4, 1/4]$ and has heights $\{v_{n}^{AE}\}_{n = 0}^{49}$.
Second, we note that by how $F_{i}$ is defined, $\nms{F_{i} \ast F_{i}}_{L^{\infty}} = 1$.
Thus by plotting $F_{i}$ and $F_{i} \ast F_{i}$ for $i = 1, 2, 3$, we can compare
our result with that of Matolcsi and Vinuesa's and AlphaEvolve's.

In the next three figures, we plot the $F_i$, their autoconvolution $F_{i} \ast F_{i}$, and compare all three autoconvolutions. The $F_{i}$ are all supported on $[-1/4, 1/4]$
and consist of $20, 50$, and $575$ equally spaced intervals while the $F_{i} \ast F_{i}$ are piecewise
linear and supported on $[-1/2, 1/2]$.
\begin{figure}[H]
\centering
\includegraphics[width=\textwidth,keepaspectratio]{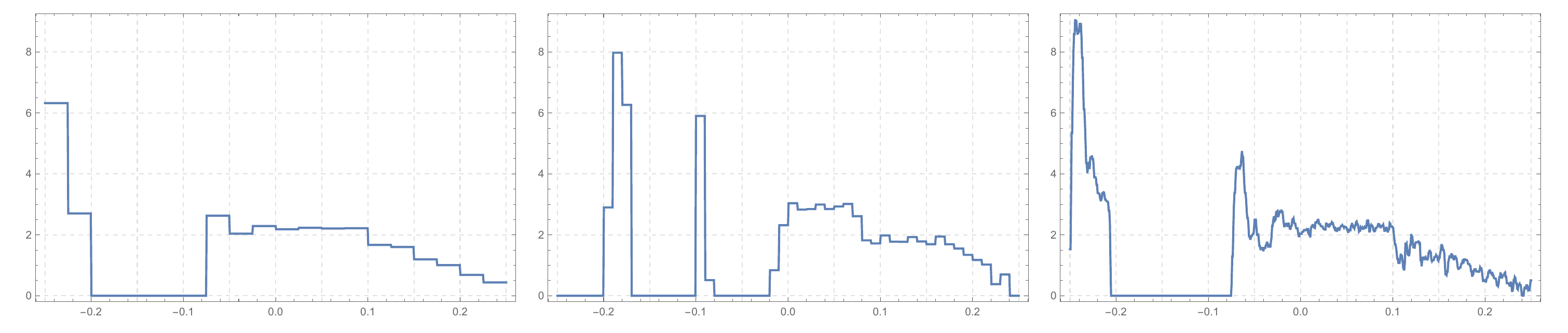}
\caption{Normalized versions of Matolcsi and Vinuesa's step function (left), AlphaEvolve's step function (center), and our step function (right)}
\end{figure}

\begin{figure}[H]
\centering
\includegraphics[width=\textwidth,keepaspectratio]{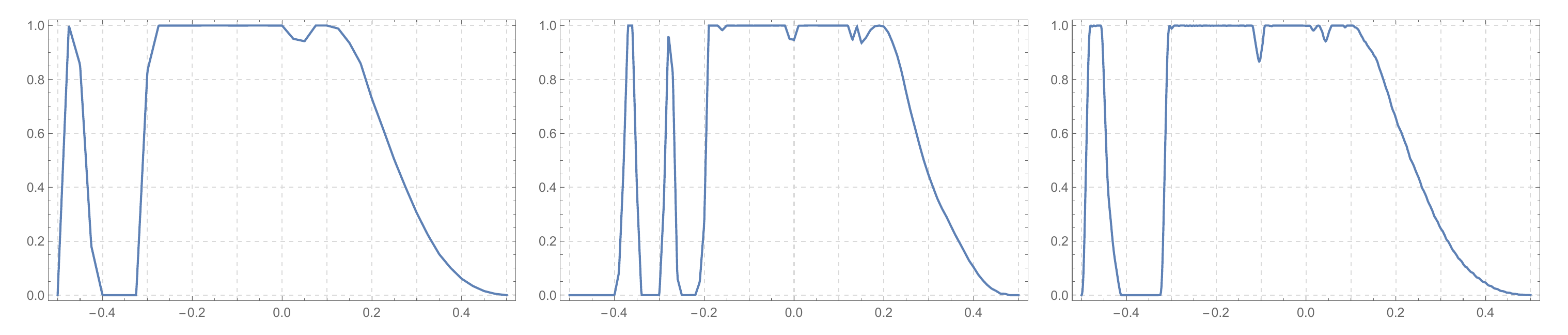}
\caption{$F_1 \ast F_1$ (left), $F_2 \ast F_2$ (center), and $F_3 \ast F_3$ (right)}
\end{figure}

\begin{figure}[H]
\centering
\includegraphics[width=\textwidth,keepaspectratio]{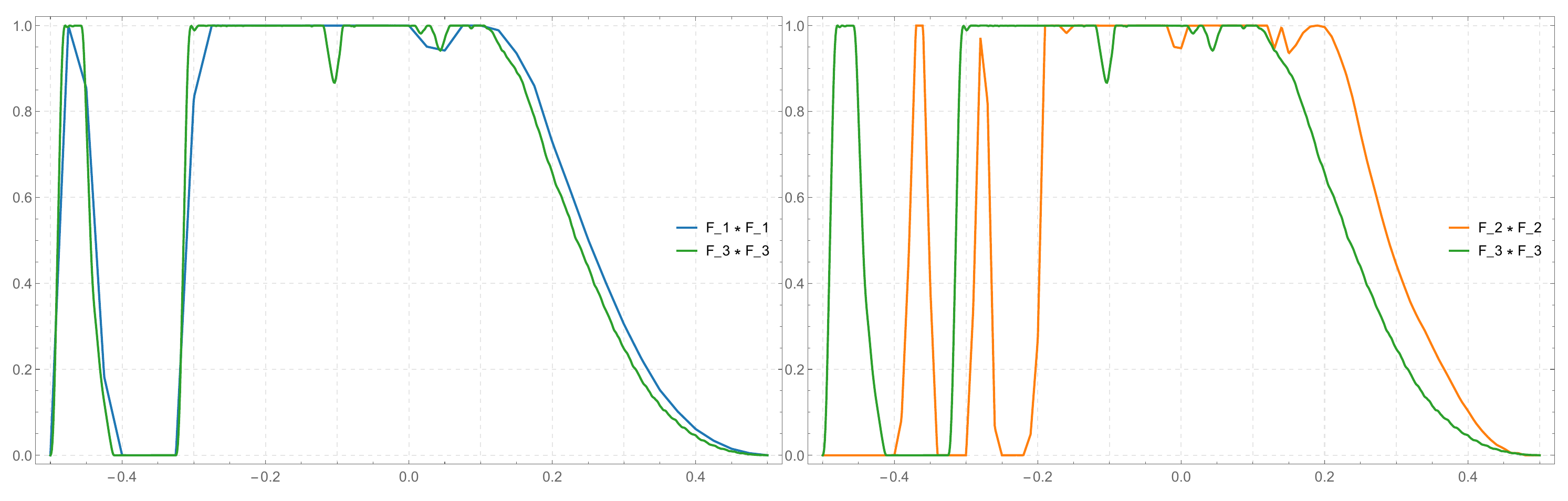}
\caption{Comparing $F_1 \ast F_1$ with $F_3 \ast F_3$ (left) and $F_2 \ast F_2$ with $F_3 \ast F_3$ (right)}
\end{figure}

Graphically, Matolcsi and Vinuesa's $F_1 \ast F_1$ is quite similar to our $F_3 \ast F_3$
while AlphaEvolve's $F_2 \ast F_2$ is somewhat different. One way to quantify this
is to measure the correlation/angle between two (continuous) functions by using the standard inner product
$\langle f, g \rangle = \int_{-1/2}^{1/2}f(x)g(x)\, dx$. 
Numerical computations give that
\begin{align*}
\frac{\int_{-1/2}^{1/2}(F_{1} \ast F_1)(x)(F_3 \ast F_3)(x)\, dx}{(\int_{-1/2}^{1/2}(F_{1} \ast F_1)(x)^{2}\, dx)^{1/2}(\int_{-1/2}^{1/2}(F_{3} \ast F_{3})(x)^{2}\, dx)^{1/2}} \approx 0.996342
\end{align*}
while
\begin{align*}
\frac{\int_{-1/2}^{1/2}(F_{2} \ast F_2)(x)(F_3 \ast F_3)(x)\, dx}{(\int_{-1/2}^{1/2}(F_{2} \ast F_2)(x)^{2}\, dx)^{1/2}(\int_{-1/2}^{1/2}(F_{3} \ast F_{3})(x)^{2}\, dx)^{1/2}} \approx 0.848466
\end{align*}
with the ratios being equal to 1 if the functions are the same.
Note that despite $F_1$ and $F_3$ (and their associated autoconvolutions) are quite similar,
we found $F_3$ independent from $F_1$, that is, our gradient based search did not presuppose knowledge of $F_1$.

\section{Description of algorithm}\label{desc}
We now describe how we found our step function in Theorem \ref{main}.
Given a large integer $N$, our program is to compute:
\begin{align*}
\argmax_{\mb{v} = (v_0, \ldots, v_{N - 1}) \in \R^{N}_{\geq 0}}Q_{N}(\mb{v}) \coloneqq \argmax_{\mb{v} = (v_0, \ldots, v_{N - 1}) \in \R^{N}_{\geq 0}}\frac{\frac{1}{3}\sum_{j = 0}^{2N - 1}L_{j}^{2} + L_{j}L_{j + 1} + L_{j + 1}^{2}}{(\max_{0 \leq j \leq 2N - 1}L_j)(\frac{1}{2}\sum_{j = 0}^{2N - 1}L_{j} + L_{j + 1})}
\end{align*}
where
$$L_j = \sum_{n = \max(0, j - N)}^{\min(j - 1, N - 1)}v_{n}v_{j - n - 1}.$$
Given a vector $\mb{v}$, by replacing the $j$th component
$\mb{v}_j$ with $\mb{v}_{j}/\sum_{\ell = 1}^{N}\mb{v}_\ell$, the above
optimization problem is equivalent to computing
$\argmax_{\mb{v} \in [0, 1]^{N}}Q_{N}(\mb{v})$.
By compactness of the $N$ dimensional unit hypercube and continuity
of $Q_{N}(\mb{v})$, a maximum must exist (though of course
this maximum can depend on $N$). We shall think
of $Q_{N}(\mb{v})$ as the score of a given vector $\mb{v}$ in the unit hypercube.

We briefly give a high level overview of our computational strategy.
The initial optimization phase uses simulated annealing:
\begin{enumerate}
\item We first choose $N = 23$.
\item We implement a coarse search to get the big picture
idea of the peaks of $Q_{N}(\mb{v})$.
We first initialize a random vector $\mb{V}^{(0)}$ in the unit
hypercube. We call it $\mb{V}_{best}$.

We then start with a perturbation scale parameter $S^{(0)} = 25$
and define $S^{(k+1)} \coloneqq S^{(k)} \cdot 0.999$.
We then iterate $\sim 1000$ times as follows: at iteration $k$,
we generate a fixed batch of candidate vectors as follows:
$$(\mb{V}_{candidate})_{j} \coloneqq |(\mb{V}_{best})_{j} + S^{(k)} \cdot \xi_{j}|$$
where $\{\xi_{j}\}_{j = 1}^{N}$ are i.i.d. uniform random variables on $(-1/2, 1/2)$. Of this batch of random vectors we compute
each $Q_{N}(\mb{V}_{candidate})$ and choose the vector (after normalizing
to lie within the unit hypercube) with the highest score to be the
new $\mb{V}_{best}$. The iteration then proceeds with iteration $k + 1$.

\item With this coarse search complete, we now fine tune the search
and run the same process as above except starting with the 
$\mb{V}_{best}$ from (1) and now $S^{(0)} = 0.05$ and define
$S^{(k + 1)} \coloneqq S^{(k)} \cdot 0.99998$ and iterate the process
outlined in (1) a total of $\sim 1000$ times.

\item We then repeat (3) a total of $\sim 400$ times.
\end{enumerate}

Next with this $\mb{V}_{best} \in \R^{23}$ from the first optimization
phase, we upscale this vector to one in $\R^{115}$ by 
repeating each entry of $\mb{V}_{best}$ five times. 
We call this upscaled vector $\mb{V}$.
We now apply the following gradient based approach:
\begin{enumerate}
\item For an integer $s \in [0, 10^{6} - 1]$, we will output
$10^6$ many vectors $\mb{V}^{(s)}$ and their scores.
\item Fix an $s$. At step $0$, let $\mb{V}^{(s)}$ be initially defined to be $\mb{V}$. We then iterate $\sim 1000$ times the following: at step $i$,
create (a discretized version of) the vector
$$\mb{V}^{(s)} + \frac{0.01}{(i + 1)^{1/4} \cdot (s + 1)^{1/3}}(1 - Q_{N}(\mb{V}^{(s)}))\cdot \frac{(\nabla Q_N)(\mb{V}^{(s)})}{\|(\nabla Q_N)(\mb{V}^{(s)})\|}.$$
After zeroing out any negative component, we let this be the
new $\mb{V}^{(s)}$ vector and proceed to step $i + 1$.
After $\sim 1000$ steps, we define $\mb{V}$ to be the final $\mathbf{V}^{(s)}$ and output its score.
Proceed to $s + 1$.
\end{enumerate}
Finally, with this vector $\mathbf{V}$ in $\R^{115}$, we then apply the initial optimization
phase with $N = 115$ and then upscale it to one in $\R^{575}$ and apply the above gradient
based approach.

The Python code for the initial optimization phase can
be found in \texttt{Inital\_HPC\_code.py} of our GitHub repository,
the code for the second more refined optimization phase can be found
in \texttt{Refine\_HPC\_code.py} of our repository. 
This yielded a list of coefficients in \texttt{f95.py}.
We then multiplied all entries by $2^{24}$ and rounded to the nearest integer,
giving the coefficients listed in the next section. This list of integer
coefficients can be found in \texttt{coeffBL.txt}.

\appendix \section{Step function coefficients}\label{coeff}
Below we list the the heights $\{v_n\}$ of the step function $f = \sum_{n = 0}^{574}v_n 1_{[n, n + 1)}$.
The table is to be read by row, so for example, $v_0 = 25437, v_1 = 56281, v_3 = 89185$.
Given this list of coefficients, one can run
the following Mathematica code below to compute \eqref{obj} for this list. The code below takes in as input a list of coefficients \texttt{v} and outputs \eqref{obj} (note that
lists in Mathematica start with 1 rather than 0 which explains why the small
shift index between how \texttt{L[j\_]} is defined over \eqref{ljdef}, but we keep \eqref{ljdef} as arrays in Python start with 0).\\

\small
\begin{verbatim}
len := Length[v]
L[j_] := Sum[v[[n]]*v[[j - n + 1]], {n, Max[1, j - len + 1], Min[j, len]}]

SquareL2 := (1/3) Sum[L[j]^2 + L[j]*L[j + 1] + L[j + 1]^2, {j, 0, 2 len}]
LInfinity := Max[Table[L[j], {j, 0, 2 len - 1}]]
L1 := (1/2) Sum[L[j] + L[j + 1], {j, 0, 2 len - 1}]

SquareL2/(LInfinity*L1)
\end{verbatim}
\normalsize

For verification of the 0.901564, the following table with the above Mathematica code is listed as \texttt{coeffBL} under the \texttt{Verify.nb} file in our GitHub respository.
We have also included the list of coefficients from \cite{MatolcsiVinuesa} and \cite{AlphaEvolve} (listed as \texttt{coeffMV} and \texttt{coeffAE}, respectively).

\begin{longtable}{llllllllll}
25437 & 56281 & 89185 & 114544 & 132666 & 143018 & 151257 & 150303 & 146083 & 143407 \\
144055 & 144553 & 148756 & 148683 & 141713 & 130151 & 114887 & 102118 & 94662 & 83763 \\
72587 & 67127 & 69821 & 72861 & 69291 & 75656 & 75568 & 76758 & 74626 & 69954 \\
64348 & 64937 & 64283 & 60723 & 56221 & 56044 & 55763 & 53595 & 52222 & 53197 \\
56146 & 56654 & 56521 & 54045 & 52519 & 51844 & 51561 & 51055 & 46657 & 40285 \\
18427 & 7 & 3 & 2 & 1 & 1 & 1 & 0 & 0 & 1 \\
1 & 1 & 2 & 1 & 1 & 1 & 1 & 0 & 0 & 0 \\
0 & 0 & 0 & 0 & 0 & 0 & 0 & 0 & 0 & 1 \\
0 & 0 & 0 & 0 & 0 & 0 & 0 & 0 & 0 & 0 \\
0 & 0 & 0 & 0 & 0 & 0 & 0 & 0 & 0 & 0 \\
0 & 3 & 4 & 4 & 1 & 0 & 1 & 1 & 3 & 4 \\
3 & 0 & 0 & 0 & 0 & 0 & 0 & 0 & 0 & 0 \\
0 & 0 & 0 & 0 & 0 & 0 & 0 & 0 & 0 & 0 \\
0 & 0 & 0 & 0 & 0 & 0 & 0 & 0 & 0 & 0 \\
0 & 0 & 0 & 0 & 0 & 0 & 0 & 0 & 2 & 3 \\
3 & 3 & 2 & 1 & 0 & 0 & 1 & 2 & 3 & 3 \\
2 & 0 & 0 & 2 & 3 & 3 & 1 & 0 & 1 & 1 \\
2 & 2 & 2 & 1 & 0 & 0 & 0 & 0 & 0 & 0 \\
0 & 0 & 0 & 0 & 0 & 0 & 0 & 0 & 2 & 2 \\
2 & 1 & 2 & 4 & 3 & 6 & 10 & 11 & 13 & 15 \\
15 & 6721 & 21513 & 37220 & 49269 & 56347 & 64491 & 69554 & 70920 & 69479 \\
70548 & 69021 & 71077 & 75321 & 79184 & 75944 & 67793 & 60720 & 56225 & 52324 \\
46466 & 42680 & 39415 & 32543 & 30727 & 31454 & 32731 & 33403 & 34021 & 37747 \\
41677 & 41681 & 37113 & 33297 & 34035 & 30821 & 27309 & 25576 & 25806 & 26145 \\
24918 & 24865 & 26194 & 26158 & 27590 & 29533 & 28794 & 26877 & 27476 & 26889 \\
29976 & 32076 & 38223 & 41254 & 43040 & 41154 & 41601 & 45871 & 46228 & 43457 \\
44525 & 46051 & 45769 & 47050 & 45949 & 43269 & 40238 & 37992 & 38670 & 39365 \\
38648 & 37712 & 39444 & 37935 & 36593 & 37993 & 38419 & 41782 & 42390 & 40152 \\
39494 & 37680 & 36909 & 36885 & 34366 & 32542 & 33995 & 33894 & 33936 & 34892 \\
35078 & 35874 & 36010 & 35848 & 35930 & 36587 & 35236 & 35757 & 33505 & 31980 \\
36406 & 37844 & 39176 & 38173 & 37214 & 37049 & 40200 & 40118 & 41103 & 41593 \\
39386 & 39261 & 38212 & 37955 & 37967 & 36884 & 36529 & 36225 & 36229 & 39558 \\
39165 & 38812 & 37868 & 38918 & 38653 & 39327 & 40263 & 39749 & 38569 & 37243 \\
35469 & 36359 & 35468 & 36688 & 37008 & 35592 & 36950 & 37621 & 38281 & 38696 \\
38017 & 37233 & 37823 & 37710 & 37493 & 36280 & 34880 & 35004 & 36186 & 35325 \\
38053 & 39850 & 38523 & 37446 & 36824 & 38413 & 40317 & 40652 & 39097 & 37652 \\
35139 & 35630 & 37379 & 36794 & 37362 & 36586 & 37036 & 37661 & 37091 & 38185 \\
41113 & 41266 & 38602 & 35196 & 37973 & 38825 & 38318 & 37600 & 35856 & 35262 \\
35541 & 37127 & 37451 & 37901 & 39269 & 39123 & 37956 & 37481 & 38275 & 38337 \\
38926 & 37316 & 35536 & 36344 & 37186 & 35762 & 34678 & 36390 & 39861 & 39489 \\
37976 & 37993 & 37256 & 34149 & 31686 & 28184 & 26284 & 23759 & 20260 & 17994 \\
17922 & 20081 & 23011 & 21818 & 19102 & 15593 & 14422 & 18174 & 28363 & 28839 \\
25665 & 22609 & 21701 & 23408 & 27782 & 33818 & 32357 & 28872 & 26451 & 26338 \\
28590 & 29215 & 29202 & 29365 & 29708 & 28446 & 28654 & 19372 & 16151 & 20889 \\
19429 & 21454 & 20068 & 20941 & 20096 & 19885 & 20727 & 23618 & 25110 & 23676 \\
19805 & 18828 & 19362 & 19544 & 21184 & 21353 & 22263 & 21574 & 19487 & 19067 \\
20818 & 23477 & 24904 & 27780 & 27178 & 25104 & 23683 & 18249 & 11753 & 11327 \\
13209 & 18430 & 19898 & 21569 & 20576 & 23150 & 22319 & 20532 & 19963 & 21172 \\
22049 & 20468 & 18309 & 15607 & 14671 & 15336 & 15447 & 17042 & 17752 & 19939 \\
17660 & 19872 & 19966 & 19530 & 19607 & 19084 & 17490 & 17382 & 16513 & 16396 \\
13438 & 10475 & 11658 & 13811 & 14499 & 14478 & 13284 & 13235 & 13374 & 13121 \\
13835 & 15625 & 15049 & 13554 & 10612 & 8114 & 8210 & 8557 & 7328 & 9389 \\
10178 & 12878 & 13488 & 14334 & 12454 & 11952 & 13060 & 11236 & 9692 & 9627 \\
9461 & 8238 & 5772 & 5285 & 6940 & 10141 & 12261 & 12118 & 9918 & 11121 \\
11244 & 12346 & 12735 & 10263 & 6928 & 6882 & 6806 & 6190 & 5457 & 7734 \\
6624 & 2676 & 3936 & 8607 & 10354 & 9826 & 6927 & 3859 & 3465 & 6052 \\
6613 & 5861 & 3600 & 291 & 57 & 118 & 3675 & 4709 & 3470 & 5792 \\
5071 & 2908 & 3084 & 5231 & 8452 & \text{} & \text{} & \text{} & \text{} & \text{} \\
\end{longtable}

\bibliographystyle{amsplain}
\bibliography{autoconv}
\end{document}